\newtheorem{theorem}{Theorem}[section]
\makeatletter \@addtoreset{equation}{section} \makeatother
\def\ddt{\frac{d}{dt}}
\numberwithin{equation}{section}
\newcommand\lam{\lambda}
\newcommand\na{\nabla}
\newcommand\Del{\Delta}
\newcommand\SL{\text{SL}(2, \mathbb{R})}
\def\ddt{\frac{d}{dt}}
\begin{document}

\bibliographystyle{plain}
\title[]
{Eigenvalues under the Ricci flow of model geometries}
\author{Songbo Hou}
\address{Department of Applied Mathematics, College of Science, China Agricultural
University,  Beijing, 100083, P.R. China}
\email{housb10@163.com}

\subjclass [2010]{Primary 53C44.} \keywords{Locally homogeneous 3-manifold; Ricci flow; Eigenvalue; Estimate}
\date{}
\def\baselinestretch{1}

\begin{abstract}

In this paper, we study the evolving behaviors of the first eigenvalue of the Laplace-Beltrami operator under the normalized Ricci flow of model geometries. In every Bianchi class,  we estimate the derivative of the eigenvalue. Then we construct monotonic quantities under the Ricci flow and obtain upper and lower bounds for the eigenvalue.
\end{abstract}
\maketitle \baselineskip 18pt
\section{Introduction}
The first eigenvalue of the Laplace-Beltrami operator plays an important role in studying the geometry and topology of manifolds. Especially the estimates of upper and lower bounds of the first eigenvalue have attracted many attentions and are an interesting subject.

 The classic results focus on the manifolds with fixed metrics. The research of the eigenvalue under the Ricci flow originates from \cite{PG02}. In that paper, Perelman proved that the first eigenvalue of $-\Del+R/4$ is nondecreasing under the Ricci flow, where $R$ is the scalar curvature of a evolving metric on a closed manifold. Using the monotonicity property, he derived that  there are no nontrivial steady or expanding breathers.
 Later, Cao \cite{Cao07} considered  eigenvalues of the operator $-\Del+\frac{R}{2}$ on manifolds with nonnegative curvature operator.  He derived the evolution equation and got that the eigenvalues are nondecreasing along the Ricci flow. In \cite{JFL07},  Li  improved Cao's result without assuming nonnegative curvature operator,  based on the same technique. The monotonicity of the first eigenvalues of $-\Del +aR\,(a\geq \frac{1}{4})$  was proved by  Cao using evolution formulae  \cite{Cao08} and by Li using entropy functionals \cite{JFL07}. Similar results hold under the normalized Ricci flow for the case $a=\frac{1}{4}$ with  nonpositive average scalar curvature \cite{Cao08}.  In \cite{CHL12},  we derived  monotonicity formulae for the first eigenvalue of $-\Del +aR \,(0<a\leq 1/2)$ under the Ricci flow and the normalized Ricci flow, and obtained various estimates on closed surfaces in terms of different Euler characteristic classes.
Especially the first eigenvalue multiplied by $e^{rt}$ is nondecreasing under the normalized Ricci flow if $R\geq 0$, where $r$  is the average of $R$.
Similar result holds  for $-\Del +aR \,(a\geq \frac{1}{4})$  without any
curvature assumption \cite{Cao08}.

For the Laplace-Beltrami operator, Ma \cite{Ma06} studied monotonicity of the first eigenvalue along  Ricci flow on a compact  domain  with Dirichlet boundary condition. Ling \cite{Ling07} proved the appropriate
multiples of the eigenvalues are monotonic along the normalized Ricci flow on closed manifolds.  In fact,  it is difficulty to get bounds and monotonicity formulae under the Ricci flow since it is hard to control  the scalar curvature and the Ricci tensor appearing in the evolution equation. The results in \cite{Ma06, Ling07} all need suitable assumptions on the scalar curvature or  Einstein tensor.  The results in  \cite {CHL12} show that  we can control the curvature and construct monotonic quantities because the estimate is reduced to an ODE problem on the Riemannian surface. For  an
orientable closed surface,  estimates of the eigenvalue of $p$-Laplace operator  were also obtained in \cite{WWZ10} by the ODE method. Related references include \cite{Hou,Ling2, Ling3}. A natural question is what is the behavior of the eigenvalue on 3-manifolds without more conditions on the curvature.  The study on homogeneous manifolds will provide us useful information about the general case.

 Since homogeneous geometries are the basic decomposed pieces
 of the geometrization of 3-manifolds,  it is significant to understand the behavior of the Ricci flows in this basic case. In \cite{IJ92}, J. Isenberg and M. Jackson studied the Ricci flows on locally  homogeneous 3-manifolds and described  their behaviors explicitly. Later, Knopf and McLeod \cite{KM01} studied the quasi-convergence of the Ricci flows of homogeneous models. Isenberg, Jackson and Lu
 \cite{IJL06} analyzed  the behaviors of the Ricci flow on locally homogenous closed 4-manifolds. Lott \cite{JL07} used the concept of groupoids and solitons to interpret the longtime behaviors of the Ricci flow on locally homogeneous closed manifolds further.

 The homogeneous geometries on simply connected 3-manifolds consist of nine classes and are divided into two sets. The first set contains $\mathbb{R}^3$, $\text{SU(2)}$, $\text{SL}(2,\mathbb{R})$, Heisenberg, $E(1,1)$ and
 $E(2)$. The other set contains $H(3)$, $H(2)\times \mathbb{R}^1$, and $\text{SO}(3)\times \mathbb{R}^1$.  The notations $H(n)$, $E(1,1)$ and $E(2)$ denote
 the group of isometries of the hyperbolic $n$-space, the group of isometries of the plane with flat Lorentz metric, the group of isometries of the Euclidian plane, respectively. The first set is called Bianchi classes \cite{IJ92}. For any given metric $g_0$ on one of Bianchi classes,
 there is  a Milnor frame $\{X_1,X_2,X_3\}$ so that the metric and the Ricci tensors are diagonal and such property is preserved by the Ricci flow.  Denote $\{\theta^1,\theta^2,\theta^3\}$ the dual coframe.  Then the Ricci flow takes the form\\
$$g(t)=A(t)(\theta^1)^2+B(t)(\theta^2)^2+C(t)(\theta^3)^2,$$ and  is reduced to an ODE system  involving  three variables $\{A(t), B(t), C(t)\}$.

   In this paper, we consider the first eigenvalue under the normalized Ricci flow on locally homogeneous closed 3-manifolds of Bianchi classes.     We  obtain  various  monotonicity formulae  and  upper and lower bounds of the eigenvalue under the Ricci flow. We wish our results could  be extended to the general manifolds. The key of our method is to  compare the components of the evolving Ricci tensor, and then control the derivative of the eigenvalue. Finally, the integration yields the results.

 \section {Evolution equation}
 In this section, we give the evolution equation for the first eigenvalue $\lam$. This is  a special case of Lemma 3.1 in \cite{CHL12}. For completeness, we provide the proof here.
 \begin{theorem}
 Let $(M, g(t))$  be a solution to the normalized Ricci flow  on a locally homogeneous  3-manifold. Denote $-\Delta$ the Laplace-Beltrami operator and $\lam(t)$ the first eigenvalue. Assume that
$u(x,t)>0$ satisfies
$$-\Delta u=\lam u,$$ with $\int u^2(x,t)d\mu=1.$
Then along the normalized Ricci flow, we obtain
\begin{align}\ddt\lam=\int (2R_{ij}\na_iu\na_ju)d\mu-\frac{2}{3}R\lam.\end{align}
\end{theorem}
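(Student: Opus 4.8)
The plan is to realize $\lambda(t)$ as the minimum of the Dirichlet energy and differentiate, exploiting the fact that the normalized Ricci flow on a locally homogeneous $3$-manifold preserves the volume form. First I would write the flow in the form $\ppt g_{ij}=-2R_{ij}+\frac{2}{3}Rg_{ij}$; here I use that on a locally homogeneous manifold the scalar curvature $R$ is constant in space, so the average scalar curvature coincides with $R$ and the normalization factor is $\frac{2}{n}r=\frac{2}{3}R$. Tracing this equation gives $g^{ij}\ppt g_{ij}=-2R+2R=0$, whence $\ppt d\mu=\frac12(g^{ij}\ppt g_{ij})\,d\mu=0$: the volume form is constant along the flow. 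Differentiating $g^{ik}g_{kj}=\delta^i_j$ then yields $\ppt g^{ij}=2R^{ij}-\frac{2}{3}Rg^{ij}$.

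Next I would use the variational characterization $\lambda(t)=\int_M |\nabla u|^2\,d\mu$, valid because $-\Delta u=\lambda u$ and $\int u^2\,d\mu=1$. Writing $|\nabla u|^2=g^{ij}\partial_i u\,\partial_j u$ and noting that the components $\partial_i u$ of the differential are metric-independent, I would differentiate under the integral sign, collecting three kinds of contributions: from $\ppt g^{ij}$, from $\ppt u$, and from $\ppt d\mu$. The term coming from $\ppt d\mu$ vanishes by volume preservation.

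The key cancellation is the term involving $\ppt u$. The two symmetric pieces combine to $2\int_M \langle \nabla(\ppt u),\nabla u\rangle\,d\mu=2\int_M (\ppt u)(-\Delta u)\,d\mu=2\lambda\int_M u\,\ppt u\,d\mu$. Differentiating the constraint $\int_M u^2\,d\mu=1$ and again using $\ppt d\mu=0$ gives $\int_M u\,\ppt u\,d\mu=0$, so this contribution drops out. What remains is the term from $\ppt g^{ij}$, namely $\int_M\left(2R^{ij}-\frac{2}{3}Rg^{ij}\right)\partial_i u\,\partial_j u\,d\mu=\int_M 2R_{ij}\nabla_i u\,\nabla_j u\,d\mu-\frac{2}{3}R\int_M|\nabla u|^2\,d\mu$; pulling the constant $R$ out of the last integral and recognizing $\int_M|\nabla u|^2\,d\mu=\lambda$ produces the claimed formula.

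The one point needing care is the differentiability of $t\mapsto\lambda(t)$ and of the associated eigenfunction, which is what legitimizes differentiating under the integral and treating $\ppt u$ as a genuine variation. I would handle this by the standard barrier argument: fix $t_0$, let $u_0$ be the normalized eigenfunction at $t_0$, and observe that since $d\mu$ is preserved, $u_0$ remains admissible, so $\lambda(t)\le\int_M|\nabla u_0|^2\,d\mu$ with equality at $t_0$; comparing the derivatives of this upper barrier at $t_0$ reproduces the formula and confirms that the $\ppt u$ term is immaterial. This is the main (and essentially the only) obstacle; the remaining steps are routine tensor manipulations.
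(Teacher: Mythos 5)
Your proof is correct, but it is organized genuinely differently from the paper's. The paper does not differentiate the Dirichlet energy itself: it quotes the evolution formula
$\ddt\lam=\int\left(\frac{2r}{3}u\Delta u-2uR_{ij}\na_i\na_ju\right)d\mu$
(the case $a=0$ of Lemma 3.1 in \cite{CHL12}, following \cite{Cao07}), in which the variation of the Laplacian has already produced the second-order term $R_{ij}\na_i\na_ju$; it then integrates by parts and invokes the contracted Bianchi identity $2\na_iR_{ij}=\na_jR$ to show that the resulting divergence term $\int 2u(\na_iR_{ij})\na_ju\,d\mu$ vanishes, via $-\Delta u=\lam u$ (on a homogeneous manifold this term is of course trivially zero, since $R$ is spatially constant, but the paper runs the general cancellation). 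Your route instead differentiates $\lambda(t)=\int g^{ij}\partial_iu\,\partial_ju\,d\mu$ directly: the trace-free character of the normalized flow in this setting ($g^{ij}\ppt g_{ij}=-2R+2R=0$, hence $\ppt d\mu=0$) together with differentiation of the constraint $\int u^2\,d\mu=1$ kills the $\ppt u$ contribution, and the formula drops out of $\ppt g^{ij}=2R^{ij}-\frac{2}{3}Rg^{ij}$ alone --- no Bianchi identity, no second derivatives of $u$, no variation formula for $\Delta$. Your version buys self-containedness and fewer moving parts; the paper's version buys alignment with the general framework of \cite{Cao07} and \cite{CHL12}, where the same manipulations handle operators $-\Delta+aR$ and where $\na R\neq 0$ makes the Bianchi step genuinely necessary. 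Finally, your closing remark on differentiability (the fixed-test-function barrier at $t_0$) addresses a point the paper passes over in silence: both proofs tacitly assume $\lambda(t)$ and $u(\cdot,t)$ vary differentiably, and the barrier device you sketch is the standard way to justify the ensuing monotonicity conclusions without that assumption.
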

 \begin{proof}
Following calculations in \cite {Cao07}, then we have
\begin{align*}
\ddt\lam=\int\left(\frac{2r}{3}u\Delta u -2uR_{ij}\na_i\na_ju\right)d\mu
\end{align*}
where $$r=\frac{\int_MRd\mu}{\int_Md\mu}$$  is the average of the scalar curvature, which equals  $R$ on locally homogeneous manifolds.
Integrating by parts and using the contracted Bianchi identity, we have
\begin{align*}
-\int 2uR_{ij}\na_i\na_jud\mu=\int \left[(2u\na_iR_{ij})\na_ju\right]d\mu+\int (2R_{ij}\na_iu\na_ju)d\mu,
\end{align*}
and
\begin{align*}
\int (2\na_iR_{ij})u\na_jud\mu &=\int u(\na_jR)\na_jud\mu=-\int Ru\Delta ud\mu-\int R|\na u|^2d\mu\\
&=\lam R-\lam R=0.
\end{align*}
We obtain
\begin{align*}
\ddt\lam=\int (2R_{ij}\na_iu\na_ju)d\mu-\frac{2}{3}R\lam.
\end{align*}
\end{proof}

\section{$\mathbb{R}^3$}
In this class,
$$g(t)=g_0, \,\forall  \,t\geq 0$$ and $\lam(t)$ is a constant.

 \section{\text{SU}(2)}

Given a metric $g_0$, we fix a Milnor frame $\{X_i\}_1^3$ such that
$$\left[X_2,X_3\right]=X_1,\,\,\,\left[X_3,X_1\right]=X_2,\,\,\,\left[X_1,X_2\right]=X_3.$$
 Under the normalization $A_0B_0C_0=1$, the nonzero components of Ricci tensor are (refer to  examples on page 171 in \cite{BLN})

\begin{equation}
\left\{
\begin{aligned}
&R_{11}=\frac{1}{2}A[A^{2}-(B-C)^{2}],\\
&R_{22}=\frac{1}{2}B[B^{2}-(A-C)^{2}],\\
&R_{33}=\frac{1}{2}C[C^{2}-(A-B)^{2}],
\end{aligned}\right.
\end{equation}
and the scalar curvature is
\begin{equation}R=\frac{1}{2}[A^{2}-(B-C)^{2}]+\frac{1}{2}[B^{2}-(A-C)^{2}]+\frac{1}{2}[C^{2}-(A-B)^{2}].\end{equation}
The Ricci flow equations are
\begin{equation}
\left\{
\begin{aligned}
&\frac{dA}{dt}=\frac{2}{3}A\left[-A(2A-B-C)+(B-C)^2\right],\\
&\frac{dB}{dt}=\frac{2}{3}B\left[-B(2B-A-C)+(A-C)^2\right],\\
&\frac{dC}{dt}=\frac{2}{3}C\left[-C(2C-A-B)+(A-B)^2\right].
\end{aligned} \right.
\end{equation}

 We assume without loss of generality that $A_{0}\geq B_{0}\geq C_{0}$.
\vskip 10pt

\noindent {\bf Lemma 4.1.} (Isenberg and Jackson \cite{IJ92} )
{\it  Let $(A,B,C)$ be a solution to system (4.3). Then we have the following results:

(1) $A\geq B\geq C$ and $A-C\leq (A_0-C_0)e^{-2C_0^2t}$.

(2) The flow converges exponentially to the fixed points $A=B=C=1$, namely the round metric on the three sphere.}

In the following, we denote $\tau$ a constant  which may vary from line to line and from section to section.
We get the following theorem.
\begin{theorem}
Let $\lam(t)$ be the  first eigenvalue of $-\Del$. Then there is time $\tau$ such that
$\lam(t) e^{\int_{\tau}^{t}(\frac{2}{3}R-2R_{33})dt}$ is nondecreasing along the normalized  Ricci flow, whereas $\lam(t) e^{\int_{\tau}^{t}(\frac{2}{3}R-2R_{11})dt}$ is nonincreasing.  Moreover, we get

$$e^{\frac{2(A_0-C_0)}{C_0^2}\left(e^{-2C_0^2t}-e^{-2C_0^2\tau}\right)}\lam(\tau)\leq \lam(t)\leq e^{\frac{5(C_0-A_0)}{2C_0^2}\left(e^{-2C_0^2t}-e^{-2C_0^2\tau}\right)}\lam(\tau)$$ for $t\geq \tau$.

As $t$ goes to $\infty$, $\lam(t)$ approaches a constant which corresponds to the eigenvalue of the round metric on the three sphere.
\end{theorem}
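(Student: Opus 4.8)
The starting point is the evolution equation (2.1) of Theorem 2.1, $\ddt\lambda=\int 2R_{ij}\na_iu\na_ju\,d\mu-\frac{2}{3}R\lambda$, together with the structural feature of the homogeneous setting: in the Milnor frame the Ricci tensor is diagonal and its principal curvatures depend only on $t$. Writing $\rho_1=\frac{R_{11}}{A}$, $\rho_2=\frac{R_{22}}{B}$, $\rho_3=\frac{R_{33}}{C}$ for the eigenvalues of $\Ric$ relative to $g$, these are spatial constants, so the quadratic form $\Ric(\na u,\na u)$ is pinched pointwise between $\rho_{\min}|\na u|^2$ and $\rho_{\max}|\na u|^2$. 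Integrating and using $\int|\na u|^2d\mu=-\int u\Delta u\,d\mu=\lambda$, I would obtain $2\rho_{\min}\lambda\le\int 2\Ric(\na u,\na u)\,d\mu\le 2\rho_{\max}\lambda$, hence $(2\rho_{\min}-\frac{2}{3}R)\lambda\le\ddt\lambda\le(2\rho_{\max}-\frac{2}{3}R)\lambda$.

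The next step is to pin down which eigenvalue is extremal. Using the explicit formulas (4.1) and $A\ge B\ge C$ from Lemma 4.1, I would compute $\rho_1-\rho_2=(A-B)(A+B-C)$, $\rho_1-\rho_3=(A-C)(A+C-B)$ and $\rho_2-\rho_3=(B-C)(B+C-A)$. The first two are manifestly nonnegative, so $\rho_1=\rho_{\max}$ throughout; the sign of the third, however, is that of $B+C-A$, which is exactly where the cutoff time $\tau$ enters. Since the flow converges to $A=B=C=1$, for all $t\ge\tau$ one has $B+C>A$, whence $\rho_2\ge\rho_3$ and $\rho_3=\rho_{\min}$. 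On $[\tau,\infty)$ the two inequalities above then integrate to $\ddt\big(\lambda\,e^{\int_\tau^t(\frac{2}{3}R-2\rho_3)dt}\big)\ge0$ and $\ddt\big(\lambda\,e^{\int_\tau^t(\frac{2}{3}R-2\rho_1)dt}\big)\le0$, which is the claimed monotonicity, with $\rho_3,\rho_1$ playing the role of the displayed $R_{33},R_{11}$.

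For the explicit two-sided bound I would rewrite the exponents as $\frac{2}{3}R-2\rho_3=\frac{2}{3}[(A-C)(A+C-B)+(B-C)(B+C-A)]\ge0$ and $2\rho_1-\frac{2}{3}R=\frac{2}{3}[(A-C)(A+C-B)+(A-B)(A+B-C)]\ge0$. After $\tau$ the coefficients $A,B,C$ lie in a fixed compact range about $1$, so the quadratic factors $A+C-B$, $A+B-C$, $B+C-A$ are bounded by absolute constants, while Lemma 4.1 gives $A-C\le(A_0-C_0)e^{-2C_0^2t}$ and $A-B,B-C\le A-C$. Substituting and integrating $\int_\tau^t e^{-2C_0^2s}ds=\frac{1}{2C_0^2}(e^{-2C_0^2\tau}-e^{-2C_0^2t})$ converts the monotonicity into the stated exponential bounds, the constants $\frac{2}{C_0^2}$ and $\frac{5}{2C_0^2}$ being precisely the book-keeping of these coefficient bounds. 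Finally, since both integrands decay exponentially, the exponents converge as $t\to\infty$, so the exponential factors tend to positive limits; combined with the monotonicity this forces $\lambda(t)$ to a finite limit, and because $g(t)$ tends to the round metric smoothly, continuous dependence of the first eigenvalue on the metric identifies this limit with the first eigenvalue of the round $S^3$.

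The main obstacle is the middle step: controlling the sign of $\rho_2-\rho_3=(B-C)(B+C-A)$ and thereby certifying that for $t\ge\tau$ the ordering of the principal Ricci curvatures is stable. This cannot be read off from $A\ge B\ge C$ alone, since $B+C-A$ can be negative away from the fixed point, so one genuinely needs the exponential convergence $A,B,C\to1$ of Lemma 4.1 in order to select $\tau$. The remaining delicacy is purely quantitative: extracting the precise constants $2$ and $5/2$ requires careful, though routine, estimation of the quadratic factors over $[\tau,\infty)$.
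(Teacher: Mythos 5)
Your proposal is correct and follows the paper's overall strategy --- use the evolution equation of Theorem 2.1, order the diagonal Ricci entries after a time $\tau$ via Lemma 4.1, pinch $\frac{d\lam}{dt}$ between two scalar ODEs, and integrate the exponentially decaying coefficients --- but you execute the pinching step differently, and arguably more carefully. The paper compares the coframe components of (4.1) directly, via $R_{11}-R_{22}=\frac{1}{2}(A-B)[(A+B)^2-C^2]$ and $R_{22}-R_{33}=\frac{1}{2}(B-C)[(B+C)^2-A^2]$, and then asserts $2R_{33}\lam-\frac{2}{3}R\lam\le\frac{d\lam}{dt}\le 2R_{11}\lam-\frac{2}{3}R\lam$. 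Strictly, the pointwise bound on $\Ric(\na u,\na u)$ involves the eigenvalues of $\Ric$ relative to $g(t)$, which are your $\rho_1=R_{11}/A$, $\rho_2=R_{22}/B$, $\rho_3=R_{33}/C$; so your pinching $(2\rho_3-\frac{2}{3}R)\lam\le\frac{d\lam}{dt}\le(2\rho_1-\frac{2}{3}R)\lam$ is the technically correct (and sharper) one, and your factorizations $\rho_1-\rho_2=(A-B)(A+B-C)$, $\rho_2-\rho_3=(B-C)(B+C-A)$ isolate the same sign obstruction $B+C-A$ that the paper's factor $(B+C)^2-A^2$ does, with the cutoff $\tau$ supplied identically by the exponential convergence to $A=B=C=1$.

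Two points deserve attention. First, your monotone quantities carry $\rho_1,\rho_3$ where the theorem displays $R_{11},R_{33}$; saying $\rho_i$ ``plays the role of'' $R_{ii}$ is not quite enough if the statement is read with the coframe components of (4.1). The bridge is one line: after $\tau$ one has $R_{11},R_{33}>0$, and since $2C-A-B\le 0$ forces $C$ nondecreasing with $C\le 1\le A$ (a fact the paper itself records and uses), $\rho_3=R_{33}/C\ge R_{33}$ and $\rho_1=R_{11}/A\le R_{11}$, so your sharper pinching implies the displayed monotonicity statements; you should state this explicitly. Second, on constants: your route through $\frac{2}{3}R-2\rho_3=\frac{2}{3}[(A-C)(A+C-B)+(B-C)(B+C-A)]$ yields, for $A,B,C$ near $1$, a coefficient of roughly $\frac{8}{3}(A_0-C_0)$ on $e^{-2C_0^2t}$, whereas the paper's cruder chains (e.g. $2R_{33}-\frac{2}{3}R\ge C^3-(A-C)^2-A^3\ge -4(A-C)$) produce the constants $4$ and $5$ appearing in the stated bounds; since smaller constants only strengthen both the lower and the upper exponential bound, your estimates imply the theorem's displayed inequalities a fortiori, so this discrepancy is harmless rather than a gap. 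Your endgame (integrability of $|\frac{1}{\lam}\frac{d\lam}{dt}|$ gives existence of $\lim_{t\to\infty}\lam(t)$, identified with the round-sphere eigenvalue by continuous dependence on the metric) matches the paper, which in fact asserts the identification without the continuity remark you supply.
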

\begin{proof}

By Lemma 4.1, we have $B-C$, $A-C$ and $A-B$ all converge exponentially to zero.  Then $R_{ij}$ approaches $\frac{1}{2}$ and $R$ approaches $\frac{3}{2}$ as $t$ goes to infinity. Using (4.1) and Lemma 4.1, we arrive at
\begin{align*}
R_{11}-R_{22}&=\frac{1}{2}[A^3-A(B-C)^2]-\frac{1}{2}[B^{3}-B(A-C)^2]\\
&=\frac{1}{2}(A-B)[(A+B)^2-C^2]\\
&\geq 0
\end{align*}
and
\begin{align*}
R_{22}-R_{33}&=\frac{1}{2}[B^3-B(A-C)^2]-\frac{1}{2}[C^{3}-C(A-B)^2]\\
&=\frac{1}{2}(B-C)[(B+C)^2-A^2]\\
&\geq 0
\end{align*}
after a time $\tau$.
Thus we obtain
$$R_{11}\geq R_{22}\geq R_{33}$$
and
$$2R_{33}\lam-\frac{2}{3}R\lam\leq \frac{d\lam}{dt}\leq 2R_{11}\lam-\frac{2}{3}R\lam$$ if $t\geq \tau$.

Then
$\lam(t) e^{\int_{\tau}^{t}(\frac{2}{3}R-2R_{33})dt}$ is nondecreasing along the normalized Ricci flow, whereas $\lam(t) e^{\int_{\tau}^{t}(\frac{2}{3}R-2R_{11})dt}$ is nonincreasing.

 Since $2C-A-B\leq 0$, then $C$ is nondecreasing and consequently $C(t)\leq 1$, $A(t)\geq 1$. Furthermore, we get
\begin{align*}
&2R_{33}-\frac{2}{3}R\\
&=C[C^{2}-(A-B)^{2}]-\frac{1}{3}[A^2+B^2+C^2-(B-C)^2-(A-C)^2-(A-B)^2]\\
&\geq C[C^{2}-(A-B)^{2}]-\frac{1}{3}(A^2+B^2+C^2)\\
&\geq  C^3-C(A-C)^2-A^2\\
&\geq C^3-(A-C)^2-A^3\\
&=-(A-C)(A^2+AC+C^2+A-C)\\
&\geq -4(A-C)\geq -4(A_0-C_0)e^{-2C_0^2 t}
\end{align*}
and \begin{align*}
&2R_{11}-\frac{2}{3}R\\
&=A[A^{2}-(B-C)^{2}]-\frac{1}{3}[A^2+B^2+C^2-(B-C)^2-(A-C)^2-(A-B)^2]\\
&\leq  A^3-C^2+(A-C)\\
&\leq A^3-C^3+(A-C)\leq 5(A-C)=5(A_0-C_0)e^{-2C_0^2 t}
\end{align*}
after a time $\tau$.

Then $$ -4(A_0-C_0)e^{-2C_0^2 t}\leq \frac{1}{\lam}\frac{d\lam}{dt}\leq 5(A_0-C_0)e^{-2C_0^2 t}$$
which implies  $\lim_{t\rightarrow \infty}\lam(t)$ exists.

Integration for $\tau$ to $t$ yields
$$e^{\frac{2(A_0-C_0)}{C_0^2}\left(e^{-2C_0^2t}-e^{-2C_0^2\tau}\right)}\lam(\tau)\leq \lam(t)\leq e^{\frac{5(C_0-A_0)}{2C_0^2}\left(e^{-2C_0^2t}-e^{-2C_0^2\tau}\right)}\lam(\tau).$$
\end{proof}

\section{$\SL$  }
Given a metric $g_0$, we fix a Milnor frame such that
$$\left[X_2,X_3\right]=-X_1,\,\,\,\left[X_3,X_1\right]=X_2,\,\,\,\left[X_1,X_2\right]=X_3.$$
Under the normalization $A_0B_0C_0=1$, the nonzero curvature components are
\begin{equation}
\left\{
\begin{aligned}
&R_{11}=\frac{1}{2}A[A^{2}-(B-C)^{2}],\\
&R_{22}=\frac{1}{2}B[B^{2}-(A+C)^{2}],\\
&R_{33}=\frac{1}{2}C[C^{2}-(A+B)^{2}],\\
&R=\frac{1}{2}[A^{2}-(B-C)^{2}]+\frac{1}{2}[B^{2}-(A+C)^{2}]+\frac{1}{2}[C^{2}-(A+B)^{2}].\end{aligned}\right.
\end{equation}
Then the Ricci flow equations are
\begin{equation}
\left\{
\begin{aligned}
&\frac{dA}{dt}=\frac{2}{3}[-A^2(2A+B+C)+A(B-C)^2],\\
&\frac{dB}{dt}=\frac{2}{3}[-B^2(2B+A-C)+B(A+C)^2],\\
&\frac{dC}{dt}=\frac{2}{3}[-C^2(2C+A-B)+C(A+B)^2].
\end{aligned} \right.
\end{equation}

Without loss of generality, we may assume that $B_0>C_0$.
\vskip 10pt

\noindent {\bf Lemma 5.1.} {(Isenberg and Jackon  \cite{IJ92})} {\it Let $(A,B,C)$ be a solution to system (5.2). Then we have the following results:

(1) $B\geq C$ for all time $t$.

(2)  $B(t)\geq C_0+\frac{2}{3}t$, $C(t)\geq C_0+\frac{2}{3}t$  and $A\leq \left( C_0+\frac{2}{3}t\right)^{-2}$.

(3) There is time $\tau$ such that $A\leq B$ for all $t\geq \tau$,  $B-C\leq (B_{\tau}-C_{\tau})e^{-kt}$ and $B\leq B_{\tau}+\frac{4}{3}t$,  where $C_{\tau}:=C(\tau)$, $k:=\frac{10}{3}C_{\tau}^2.$

The Ricci flow evolves toward the pancake degeneracy.}
\begin{theorem}
Let $\lam(t)$ be the  first eigenvalue of $-\Del$. Then there is time $\tau$ such that
$\lam(t) e^{\int_{\tau}^{t}(\frac{2}{3}R-2R_{33})dt}$ is nondecreasing along the normalized Ricci flow, whereas $\lam(t) e^{\int_{\tau}^{t}(\frac{2}{3}R-2R_{11})dt}$ is nonincreasing. Moreover, we get
 $$ \lam(\tau)e^{-2(t-\tau)}\leq \lam(t)\leq \lam(\tau)\left(\frac{C_0+\frac{2}{3}t}{C_0+\frac{2}{3}\tau}\right)^3$$ for $t\geq \tau$.
\end{theorem}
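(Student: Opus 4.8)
The plan is to follow the template of the proof of Theorem 4.2, substituting the $\SL$ curvature components (5.1) and the rates recorded in Lemma 5.1, and exploiting the volume normalization $ABC=1$ to pin down the leading terms. First I would feed the evolution equation (2.1) into the same pinching argument used in Theorem 4.2: once the diagonal Ricci components are ordered as $R_{11}\ge R_{22}\ge R_{33}$, the same squeezing as there gives
\begin{align*}
2R_{33}\lam-\tfrac23 R\lam\ \le\ \ddt\lam\ \le\ 2R_{11}\lam-\tfrac23 R\lam .
\end{align*}
Establishing the ordering is the first concrete task. From (5.1) I would compute
\begin{align*}
R_{22}-R_{33}=\tfrac12(B-C)\big[(B+C)^2-A^2\big],\qquad R_{11}-R_{22}=\tfrac12\big\{(A-B)\big[(A+B)^2-C^2\big]+4ABC\big\},
\end{align*}
both nonnegative for $t\ge\tau$: the first because $B\ge C$ and $B+C>A$ by Lemma 5.1, the second because after $\tau$ one has $A\le B$, so the first bracket is nonpositive and, being exponentially small, is outweighed by $4ABC=4$. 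With the pinching in force, the integrating factor identity shows at once that $\lam(t)e^{\int_\tau^t(\frac23R-2R_{33})dt}$ is nondecreasing and $\lam(t)e^{\int_\tau^t(\frac23R-2R_{11})dt}$ is nonincreasing.

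The crux is to turn these into the explicit two-sided bound, and here $ABC=1$ does the work. Using $R=-\tfrac12[A^2+(B-C)^2+2A(B+C)]\le 0$, so that $-\tfrac23R=\tfrac13[A^2+(B-C)^2+2A(B+C)]$, and expanding $2R_{33}=C[C^2-(A+B)^2]=C^3-CB^2-CA^2-2$ via $ABC=1$, I would obtain
\begin{align*}
2R_{33}-\tfrac23R+2=-C(B-C)(B+C)-CA^2+\tfrac13A^2+\tfrac13(B-C)^2+\tfrac23A(B+C).
\end{align*}
The positive term $\tfrac23A(B+C)=\tfrac23(\tfrac1B+\tfrac1C)$ is of order $t^{-1}$, while $CA^2$ is of order $t^{-3}$ and $C(B-C)(B+C)$ is exponentially small by Lemma 5.1(3); hence the right-hand side is nonnegative after $\tau$, giving $2R_{33}-\tfrac23R\ge -2$. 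For the upper direction, $2R_{11}=A[A^2-(B-C)^2]$ is of order $A^3$ and $R\le 0$, so the dominant contribution is again $\tfrac23A(B+C)=\tfrac23(\tfrac1B+\tfrac1C)\le \tfrac{4}{3(C_0+\frac23 t)}$ by Lemma 5.1(2); the remaining $O(A^3)+O(A^2)$ and exponentially small terms are absorbed to yield $2R_{11}-\tfrac23R\le \frac{2}{C_0+\frac23 t}$.

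Combining the two estimates gives
\begin{align*}
-2\ \le\ \frac1\lam\ddt\lam\ \le\ \frac{2}{C_0+\frac23 t},
\end{align*}
and integrating from $\tau$ to $t$, with $\int_\tau^t\frac{2}{C_0+\frac23 s}\,ds=3\log\frac{C_0+\frac23 t}{C_0+\frac23\tau}$, produces exactly
$$\lam(\tau)e^{-2(t-\tau)}\le \lam(t)\le \lam(\tau)\Big(\frac{C_0+\frac23 t}{C_0+\frac23\tau}\Big)^{3}.$$
I expect the sharp lower estimate $2R_{33}-\tfrac23R\ge -2$ to be the main obstacle: unlike the $\text{SU}(2)$ case, where every component converges to the common value $\tfrac12$, here the terms do not settle to a common limit, so the bound rests on the exact cancellation $2ABC=2$ and on correctly weighing the polynomially small terms against the exponentially small one, rather than on mere exponential convergence of $A-C$ and $B-C$ to zero.
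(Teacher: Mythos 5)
Your overall architecture coincides with the paper's proof: the same ordering $R_{11}\ge R_{22}\ge R_{33}$ after a time $\tau$, the same pinching of $\frac{d\lam}{dt}$ via (2.1), the same two estimates $2R_{33}-\frac{2}{3}R\ge -2$ and $2R_{11}-\frac{2}{3}R\le \frac{2}{C_0+\frac{2}{3}t}$ (your decomposition of $2R_{33}-\frac{2}{3}R+2$ and your weighing of the terms $\frac{2}{3}A(B+C)=\frac{2}{3}(\frac{1}{B}+\frac{1}{C})$, $CA^2$, $C(B-C)(B+C)$ are correct and match the paper's computation in substance), and the same final integration. However, there is one genuinely wrong justification, precisely at the step you call ``the first concrete task'': you assert that $(A-B)\bigl[(A+B)^2-C^2\bigr]$ is exponentially small and hence outweighed by $4ABC=4$. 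This is false. Since $B-A\approx B\sim \frac{2}{3}t$ grows linearly, $A+B+C\approx 2B$, and $A+B-C=A+(B-C)$ with $A$ only polynomially small, the product $(B-A)(A+B+C)(A+B-C)$ is of size $O(1)$; in fact it converges to $2$, because its leading part is $2AB^2=2B/C\to 2$ by $ABC=1$ and Lemma 5.1. So the quantity you dismiss as negligible actually tends to $-2$, and the inequality $R_{11}\ge R_{22}$ holds only because of the quantitative margin $2<4$, not because the correction vanishes.

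The repair is exactly the paper's computation: bound $(B-A)(A+B+C)(A+B-C)\le B(A+2B)(A+B-C)=A^2B+2AB^2+AB(B-C)+2B^2(B-C)$, note that $A^2B=O(t^{-3})$, that $AB(B-C)$ and $2B^2(B-C)$ decay to zero by the exponential bound on $B-C$ despite the polynomial factors, while $2AB^2=2B/C\to 2<4$; hence $R_{22}-R_{11}=\frac{1}{2}\bigl[(B-A)(A+B+C)(A+B-C)-4\bigr]<0$ for $t\ge\tau$. It is somewhat ironic that you correctly flag, for the estimate $2R_{33}-\frac{2}{3}R\ge-2$, that the argument ``rests on the exact cancellation $2ABC=2$ and on correctly weighing the polynomially small terms against the exponentially small one,'' yet at the ordering step you rely on exactly the kind of mere-exponential-smallness claim that fails here. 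Everything else in your proposal --- the identities for $R_{22}-R_{33}$ and $R_{11}-R_{22}$, the expression $R=-\frac{1}{2}\bigl[A^2+(B-C)^2+2A(B+C)\bigr]$, and the integration $\int_\tau^t\frac{2}{C_0+\frac{2}{3}s}\,ds=3\log\frac{C_0+\frac{2}{3}t}{C_0+\frac{2}{3}\tau}$ --- is correct and in line with the paper.
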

\begin{proof}
First, let us compare $R_{11}$ with $R_{22}$:
\begin{align*}
R_{22}-R_{11}&=\frac{1}{2}[B^{3}-B(A^{2}+C^{2}+2AC)]-\frac{1}{2}[A^{3}-A(B^{2}+C^{2}-2BC)]\\
&=\frac{1}{2}[(B-A)(A^{2}+B^{2}+2AB-C^{2})-4ABC]\\
&=\frac{1}{2}[(B-A)(A+B+C)(A+B-C)-4]
\end{align*}
since $ABC=1$.

Then by Lemma 5.1, we have
\begin{align*}
&  (B-A)(A+B+C)(A+B-C)\\
 & \leq B(A+2B)(A+B-C)\\
& =A^{2}B+2AB^{2}+AB(B-C)+2B^{2}(B-C),
\end{align*}

$$A^{2}B\leq \left(C_{0}+\frac{2}{3} t\right)^{-4}\left(B_{\tau} +\frac{4}{3}t \right),$$
$$2AB^{2}=2ABC\times\frac{B}{C}=\frac{2B}{C},$$
$$AB(B-C)\leq \left(C_{0}+\frac{2}{3} t\right)^{-2}\left(B_{\tau} +\frac{4}{3}t \right)\left(B_{\tau}-C_{\tau}\right)e^{-kt},$$
and
$$2B^{2}(B-C)\leq 2\left(B_{\tau} +\frac{4}{3}t \right)^{2}\left(B_{\tau}-C_{\tau}\right)e^{-kt}$$
after a time $\tau$.

Noting that $B/C$ approaches  1 and other terms shrink to zero as $t$ goes  to $\infty$ , we get  $R_{11}>R_{22}$ with $t\geq \tau$.

Next we compare $R_{11}$ with $R_{33}$. By Lemma 5.1,  it is easy to see that $$A(t)\geq \frac{1}{(B_{\tau}+\frac{4}{3}t)^2}$$  and then $R_{11}>0$ and $R_{33}<0$ if $t$ is large enough.

Finally for $R_{22}$ and $R_{33}$, we get the following identity
\begin{align*}
R_{22}-R_{33}&=\frac{1}{2}B[B^{2}-(A+C)^{2}]-\frac{1}{2}C[C^{2}-(A+B)^{2}]\\
&=\frac{1}{2}(B-C)[(B+C)^{2}-A^{2}]
\end{align*}
and obtain $R_{22}\geq R_{33}$ after a time $\tau$.
So we have $R_{11}(t)>R_{22}(t)\geq R_{33}(t)$ if $t\geq \tau$.

Then we can rewrite the evolving equation for $\lam$
$$\left(2R_{33}-\frac{2}{3}R\right)\lam\leq \frac{d\lam}{dt}\leq \left(2R_{11}-\frac{2}{3}R\right)\lam.$$
This yields that
$$\frac{d}{dt}\left(\lam (t) e^{\int_{\tau}^{t}(\frac{2}{3}R-2R_{33})dt}\right)\geq 0$$
and
$$\frac{d}{dt}\left(\lam (t) e^{\int_{\tau}^{t}(\frac{2}{3}R-2R_{11})dt}\right)\leq 0.$$

Then
$\lam(t) e^{\int_{\tau}^{t}(\frac{2}{3}R-2R_{33})dt}$ is nondecreasing for $t\geq \tau$ along the normalized Ricci flow, whereas $\lam(t) e^{\int_{\tau}^{t}(\frac{2}{3}R-2R_{11})dt}$ is nonincreasing.

Moreover, the behaviors of $A,B,C$ imply the existence of $\tau$ such that
\begin{align*}
&2R_{33}-\frac{2}{3}R\\
&=C[C^{2}-(A+B)^{2}]+\frac{1}{3}(A^{2}+B^{2}+C^{2}+2AC+2AB-2BC)\\
&=\frac{1}{3}A^{2}+\frac{2}{3}AC+\frac{2}{3}AB+\frac{1}{3}(B-C)^{2}-C(B+C)(B-C)\\
&-2ABC-A^{2}C\\
&\geq \frac{1}{3}(A^{2}+AC+AB)-2-A^{2}C\\
&\geq -2
\end{align*}  after $\tau$ since $B-C$ decays exponentially to zero and $A\leq \left(C_0+\frac{2}{3}t \right)^{-2}$,

\begin{align*}
&2R_{11}-\frac{2}{3}R\\
&= A[A^{2}-(B-C)^{2}]+\frac{1}{3}(A^{2}+B^{2}+C^{2}+2AB+2AC-2BC)\\
&\leq \frac{1}{3}(A^{2}+(B-C)^{2}+2AB+2AC+3A^{3})\\
&\leq \frac{1}{3}(2A^{2}+2AB+2AC)\\
&\leq \frac{1}{3}(2AB+2AB+2AB)\\
&=2AB \leq \frac{2}{C}\leq \frac{2}{C_0+\frac{2}{3}t}
\end{align*}
 for all $t\geq \tau$.

Integration from $\tau$  to $t$  gives
 $$ \lam(\tau)e^{-2(t-\tau)}\leq \lam(t)\leq \lam(\tau)\left(\frac{C_0+\frac{2}{3}t}{C_0+\frac{2}{3}\tau}\right)^3.$$

 \end{proof}
\section{Heisenberg}
In this class, given a metric $g_0$, there is fixed Milnor frame such that
$$\left[X_2,X_3\right]=X_1,\,\,\,\left[X_3,X_1\right]=0,\,\,\,\left[X_1,X_2\right]=0.$$

 Under the normalization $A_0B_0C_0=1$, the curvature components for metrics are
\begin{equation}
\left\{
\begin{aligned}
&R_{11}=\frac{1}{2}A^{3},\\
&R_{22}=-\frac{1}{2}A^{2}B\\
&R_{33}=-\frac{1}{2}A^{2}C,\\
&R=-\frac{1}{2}A^{2}.
\end{aligned}\right.
\end{equation}
The  Ricci flow equations are then
\begin{equation}
\left\{
\begin{aligned}
&\ddt A=-\frac{4}{3}A^3,\\
&\ddt B=\frac{2}{3}A^2B,\\
&\ddt C=\frac{2}{3}A^2C.
\end{aligned}
\right.
\end{equation}
 and the solution   is
\begin{equation}
\left\{
\begin{aligned}
&A=A_{0}\left(1-\frac{16}{3}R_{0}t\right)^{-1/2},\\
&B=B_{0}\left(1-\frac{16}{3}R_{0}t\right)^{1/4},\\
&C=C_{0}\left(1-\frac{16}{3}R_{0}t\right)^{1/4}.
\end{aligned}\right.
\end{equation}
where $R_{0}=-\frac{1}{2}A_{0}^{2}$.

Then Ricci flow in this class approaches the pancake degeneracy.

Assume that $B_0\geq C_0$. We get the following theorem.
\begin{theorem}
Let $\lam(t)$ be the  first eigenvalue of $-\Del$. Then  $\lam(t) e^{\int_{0}^{t}(A^{2}B-\frac{1}{3}A^{2})dt}$ is nondecreasing  along the normalized Ricci flow, whereas $\lam(t) e^{\int_{0}^{t}-(\frac{1}{3}A^{2}+A^{3})dt}$ is nonincreasing. Moreover, we get
$$\lam(t)\geq \lam(0)\left[\left(1-\frac{16}{3}R_0 t\right)^{1/8}e^{-\frac{3B_0}{2}\left[(1+\frac{8}{3}A_0^2t)^{1/4}-1\right]}\right]$$
and
$$\lam(t)\leq \lam(0)\left[\left(1-\frac{16}{3}R_0 t\right)^{1/8}e^{-\frac{A_0}{4}\left[(1+\frac{8}{3}A_0^2t)^{-1/2}-1\right]}\right].$$
\end{theorem}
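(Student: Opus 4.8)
The plan is to run exactly the integrating–factor argument used in the $\text{SU}(2)$ and $\SL$ sections, but now with the advantage that the flow is solved in closed form by (6.3). I would begin with the evolution equation (2.1). On a locally homogeneous manifold the Ricci tensor is diagonal in the Milnor coframe, so $\int 2R_{ij}\na_i u\na_j u\,d\mu=\sum_i 2R_{ii}\int(\na_i u)^2\,d\mu$, and since $\sum_i\int(\na_i u)^2\,d\mu=\int|\na u|^2\,d\mu=\lam$, the derivative $\ddt\lam$ is squeezed between $(2\min_i R_{ii}-\frac23 R)\lam$ and $(2\max_i R_{ii}-\frac23 R)\lam$.

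Next I would read off the ordering of the diagonal curvatures from (6.1). Since $A>0$ we have $R_{11}=\frac12 A^3>0$, while $R_{22}=-\frac12 A^2 B$ and $R_{33}=-\frac12 A^2 C$ are both negative; the explicit solution (6.3) gives $B=B_0 s^{1/4}$ and $C=C_0 s^{1/4}$ with $s:=1-\frac{16}{3}R_0 t=1+\frac83 A_0^2 t$, so the hypothesis $B_0\geq C_0$ forces $B\geq C$ and hence $R_{22}\leq R_{33}<0<R_{11}$ for \emph{all} $t\geq 0$. Thus $R_{11}$ is the largest and $R_{22}$ the smallest component, and using $R=-\frac12 A^2$ the sandwich becomes
\[
-\left(A^2B-\tfrac13 A^2\right)\lam\leq \ddt\lam\leq \left(A^3+\tfrac13 A^2\right)\lam .
\]
Multiplying by the integrating factors $e^{\int_0^t(A^2B-\frac13 A^2)dt}$ and $e^{-\int_0^t(\frac13 A^2+A^3)dt}$ respectively then gives at once that $\lam(t)e^{\int_0^t(A^2B-\frac13 A^2)dt}$ is nondecreasing and $\lam(t)e^{-\int_0^t(\frac13 A^2+A^3)dt}$ is nonincreasing, as claimed.

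For the explicit bounds I would integrate the two inequalities from $0$ to $t$ and compare each monotone quantity with its value $\lam(0)$ at $t=0$, obtaining $\lam(0)e^{-\int_0^t(A^2B-\frac13 A^2)dt}\leq\lam(t)\leq\lam(0)e^{\int_0^t(\frac13 A^2+A^3)dt}$. Everything then reduces to evaluating $\int_0^t A^2B\,dt$, $\int_0^t A^2\,dt$ and $\int_0^t A^3\,dt$. Substituting (6.3) and changing variables by $dt=\frac{3}{8A_0^2}\,ds$ turns these into the elementary power integrals $\int s^{-3/4}\,ds$, $\int s^{-1}\,ds$ and $\int s^{-3/2}\,ds$: the $s^{-1}$ term produces the common factor $(1-\frac{16}{3}R_0 t)^{1/8}$ shared by both bounds, while the $s^{-3/4}$ term produces the exponential factor involving $(1+\frac83 A_0^2 t)^{1/4}-1$ and the $s^{-3/2}$ term the factor involving $(1+\frac83 A_0^2 t)^{-1/2}-1$. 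Reassembling these yields the stated lower and upper bounds.

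The pleasant feature here, in contrast with the earlier Bianchi classes, is that there is essentially no analytic obstacle: the ODE (6.2) is integrated explicitly by (6.3), so the component ordering $R_{22}\leq R_{33}<R_{11}$ holds from $t=0$ onward rather than only after some $\tau$, and no exponential-decay lemma is needed. The only point demanding care is the bookkeeping of the power integrals and their integration constants — in particular keeping the numerical coefficients in the exponents correct when the change-of-variable prefactor $\frac{3}{8A_0^2}$ is combined with the powers of $A_0$ coming from $A^2B$ and $A^3$.
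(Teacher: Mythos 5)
Your strategy is exactly the paper's: order the diagonal components from (6.1) and (6.3) ($R_{11}=\tfrac12A^3>0>R_{33}\geq R_{22}$ for all $t\geq 0$, using $B_0\geq C_0$), sandwich $\ddt\lam$ between $(2R_{22}-\tfrac23R)\lam$ and $(2R_{11}-\tfrac23R)\lam$, read off the two monotone quantities, and then integrate the closed-form solution. Your lower bound is fine: $\int_0^t(\tfrac13A^2-A^2B)\,dt'=\tfrac18\ln\bigl(1+\tfrac83A_0^2t\bigr)-\tfrac{3B_0}{2}\bigl[(1+\tfrac83A_0^2t)^{1/4}-1\bigr]$, which reproduces the stated bound exactly. (One small caveat you share with the paper: the identity $\sum_i\int(\na_iu)^2d\mu=\lam$ treats the Milnor frame as orthonormal, whereas the metric is $A(\theta^1)^2+B(\theta^2)^2+C(\theta^3)^2$; strictly the sandwich should involve the eigenvalues $R_{ii}/g_{ii}$ of $\Rc$ relative to $g$. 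Since the paper uses the frame components $R_{ii}$ throughout, this does not distinguish your argument from the paper's.)

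The genuine gap is in your final "reassembling": it does not produce the stated upper bound. From your (correct, per (6.4)) upper integrand $\tfrac13A^2+A^3$ one gets $\int_0^t A^3\,dt'=-\tfrac{3A_0}{4}\bigl[(1+\tfrac83A_0^2t)^{-1/2}-1\bigr]$, so your route yields the exponent $-\tfrac{3A_0}{4}\bigl[(1+\tfrac83A_0^2t)^{-1/2}-1\bigr]$, not the stated $-\tfrac{A_0}{4}\bigl[\cdots\bigr]$. Since $(1+\tfrac83A_0^2t)^{-1/2}-1\leq 0$, the theorem's stated exponent is strictly smaller, i.e.\ the stated upper bound is strictly stronger than what your inequality gives, and it simply does not follow from (6.4). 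The source of the mismatch is in the paper itself: between (6.4) and the integration step the paper silently replaces $\tfrac13A^2+A^3$ by $\tfrac13(A^2+A^3)$, and it is this factor-of-$3$ slip on the $A^3$ term that manufactures the constant $\tfrac{A_0}{4}$. So, carried out faithfully, your computation proves the theorem with $\tfrac{3A_0}{4}$ in place of $\tfrac{A_0}{4}$ — a correct but weaker upper bound — and your claim that the bookkeeping "yields the stated bounds" papers over exactly the step where the constant changes; you should either flag the discrepancy or restate the upper bound with the coefficient $\tfrac{3A_0}{4}$.
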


\begin{proof}
By (6.1) and (6.3), we have $R_{11}> R_{33}\geq R_{22}. $  It follows from (2.1) that
\begin{equation}\left(\frac{1}{3}A^{2}-A^{2}B\right)\lam\leq\frac{d}{dt}\lam\leq \left(\frac{1}{3}A^{2}+A^{3}\right)\lam.\end{equation}
Thus we get
$$\ddt\left(\lam(t) e^{\int_{0}^{t}(A^{2}B-\frac{1}{3}A^{2})dt}\right)\geq 0$$
and
$$\ddt\left(\lam(t) e^{\int_{0}^{t}-(\frac{1}{3}A^{2}+A^{3})dt}\right)\leq 0.$$

Then $\lam(t) e^{\int_{0}^{t}(A^{2}B-\frac{1}{3}A^{2})dt}$ is nondecreasing  along the normalized Ricci flow, whereas $\lam(t) e^{\int_{0}^{t}-(\frac{1}{3}A^{2}+A^{3})dt}$ is nonincreasing.

Furthermore, we have
$$\frac{1}{3}A^{2}-A^{2}B=\frac{A_0^2}{3\left(1-\frac{16}{3}R_{0}t\right)}-\frac{A_{0}^2B_{0}}{\left(1-\frac{16}{3}R_{0}t\right)^{3/4}}$$
and $$\frac{1}{3}(A^{2}+A^{3})=\frac{A_0^2}{3\left(1-\frac{16}{3}R_{0}t\right)}+\frac{A_0^3}{3\left(1-\frac{16}{3}R_{0}t\right)^{3/2}}.$$
Integration from 0 to $t$ yields
\begin{align*}\lam(t)&\geq \lam(0)\left[\left(1-\frac{16}{3}R_0 t\right)^{-\frac{A_0^2}{16R_0}}e^{\frac{3A_0^2B_0}{4R_0}\left[\left(1-\frac{16}{3}R_{0}t\right)^{1/4}-1\right]}\right]\\
&=\lam(0)\left[\left(1-\frac{16}{3}R_0 t\right)^{1/8}e^{-\frac{3B_0}{2}\left[\left(1+\frac{8}{3}A_0^2t\right)^{1/4}-1\right]}\right]
\end{align*}
and
\begin{align*}\lam(t)&\leq \lam(0)\left[\left(1-\frac{16}{3}R_0 t\right)^{-\frac{A_0^2}{16R_0}}e^{\frac{A_0^3}{8R_0}\left[\left(1-\frac{16}{3}R_0t\right)^{-1/2}-1\right]}\right]\\
&=\lam(0)\left[\left(1-\frac{16}{3}R_0 t\right)^{1/8}e^{-\frac{A_0}{4}\left[\left(1+\frac{8}{3}A_0^2t\right)^{-1/2}-1\right]}\right].
\end{align*}
\end{proof}
\section{E(1,1)}
Given a metric $g_0$, we choose a fixed Milnor frame such that
$$\left[X_1,X_2\right]=0,\,\,\,\left[X_2,X_3\right]=-X_1,\,\,\,\left[X_3,X_1\right]=X_2.$$
 Under the normalization $A_0B_0C_0=1$, the nonzero curvature components of the metric  are
\begin{equation}
\left\{
\begin{aligned}
&R_{11}=\frac{1}{2}A(A^2-B^2),\\
&R_{22}=\frac{1}{2}B(B^2-A^2),\\
&R_{33}=-\frac{1}{2}C(A+B)^2,\\
&R=-\frac{1}{2}(A+B)^{2}.\end{aligned}\right.
\end{equation}
The Ricci flow equations are
\begin{equation}
\left\{
\begin{aligned}
&\frac{dA}{dt}=\frac{2}{3}[-2A^3-AB(A-B)],\\
&\frac{dB}{dt}=\frac{2}{3}[-2B^3+AB(A-B)]\\
&\frac{dC}{dt}=\frac{2}{3}C(A+B)^2.
\end{aligned} \right.
\end{equation}
Without loss of generality, we may assume  that $A_0\geq B_0$.
\vskip 10pt

\noindent{\bf Lemma 7.1.} (Isenberg and Jackon \cite{IJ92}) {\it  Let $(A,B,C)$ be a solution to system (7.2). Then we have the following results:

(1) $$B_0(1+\frac{8}{3}A_0^2t)^{-1/2}\leq B\leq A\leq A_0(1+\frac{8}{3}A_0^2t)^{-1/2}$$ and
$$C_0+\frac{4}{3}t\leq C\leq C_0+\frac{8}{3}\left(\frac{A_0}{B_0}\right)t.$$

(2) $$(A_0-B_0)(1+\frac{8}{3}A_0^2t)^{-2}\leq A-B\leq (A_0-B_0)(1+\frac{8}{3}B_0^2t)^{-2}$$ and

 $$(A_0+B_0)(1+\frac{8}{3}A_0^2t)^{-1}\leq A+B\leq (A_0+B_0)(1+\frac{8}{3}B_0^2t)^{-1}.$$

 Then Ricci flow in this class approaches the cigar degeneracy.}
\begin{theorem}
Let $\lam(t)$ be the  first eigenvalue of $-\Del$. Then there is time $\tau$  such that $\lam(t) e^{\int_{\tau}^{t}-\left[\frac{1}{3}(A+B)^2-C(A+B)^2\right]dt}$ is nondecreasing  along the normalized Ricci flow,
whereas $\lam(t) e^{\int_{\tau}^{t}-\left[\frac{1}{3}(A+B)^2+A(A^2-B^2)\right]dt}$ is nonincreasing. Moreover, we get

$$\lam(\tau)(\frac{t}{\tau})^{-c_2}\leq \lam(t)\leq \lam(\tau)e^{c_1(\frac{1}{\tau}-\frac{1}{t})}$$ for $t\geq \tau$,
where $c_1$ and $c_2$ are two constants.
 \end{theorem}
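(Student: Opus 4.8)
The plan is to follow the template used in the earlier Bianchi classes: first establish the correct ordering of the diagonal Ricci components $R_{11},R_{22},R_{33}$ in the Milnor frame, then convert the evolution equation (2.1) into a differential sandwich for $\ddt\lam$ via the Rayleigh-quotient bound, and finally integrate the resulting inequalities after estimating the two integrands by means of Lemma 7.1.

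First I would fix the ordering of the Ricci eigenvalues. From (7.1), the assumption $A_0\geq B_0$ together with Lemma 7.1 gives $A\geq B$ for all $t$, so $R_{11}=\frac{1}{2}A(A^2-B^2)\geq 0$, while $R_{22}=-\frac{1}{2}B(A^2-B^2)\leq 0$ and $R_{33}=-\frac{1}{2}C(A+B)^2\leq 0$. The only delicate comparison is between $R_{22}$ and $R_{33}$. Factoring,
$$R_{22}-R_{33}=\frac{1}{2}(A+B)\left[C(A+B)-B(A-B)\right],$$
and I would invoke Lemma 7.1 to observe that $C(A+B)$ grows while $B(A-B)$ decays (since $C$ grows linearly, $A+B\sim t^{-1}$, and $A-B\sim t^{-2}$), so the bracket is positive after some time $\tau$. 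Hence $R_{11}\geq R_{22}\geq R_{33}$ for $t\geq\tau$.

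Next, since the metric is diagonal in an orthonormal Milnor frame, the quadratic form $2R_{ij}\na_iu\na_ju$ has eigenvalues $2R_{kk}$ and is pinched between $2R_{33}|\na u|^2$ and $2R_{11}|\na u|^2$; integrating and using $\int|\na u|^2 d\mu=\lam$ turns (2.1) into
$$\left(2R_{33}-\tfrac{2}{3}R\right)\lam\leq\ddt\lam\leq\left(2R_{11}-\tfrac{2}{3}R\right)\lam.$$
The two monotone quantities then follow at once by integrating factors. A direct computation with (7.1) gives $2R_{33}-\frac{2}{3}R=\left(\frac{1}{3}-C\right)(A+B)^2$ and $2R_{11}-\frac{2}{3}R=A(A^2-B^2)+\frac{1}{3}(A+B)^2$, which are exactly the exponents appearing in the statement.

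For the explicit bounds I would estimate these two expressions via the polynomial bounds of Lemma 7.1. For the lower bound, dropping the positive $\frac{1}{3}(A+B)^2$ and bounding $C(A+B)^2$ from above using $C\leq C_0+\frac{8}{3}\frac{A_0}{B_0}t$ and $A+B\leq(A_0+B_0)(1+\frac{8}{3}B_0^2t)^{-1}$, one sees the product decays like $t^{-1}$, so $\left(\frac{1}{3}-C\right)(A+B)^2\geq-\frac{c_2}{t}$ for a constant $c_2$; integration from $\tau$ to $t$ produces the factor $(t/\tau)^{-c_2}$. For the upper bound the dominant term is $\frac{1}{3}(A+B)^2\sim t^{-2}$, while the cubic term $A(A^2-B^2)$ decays faster (like $t^{-7/2}$), so the whole expression is $\leq\frac{c_1}{t^2}$; since $\int_\tau^t s^{-2}\,ds=\frac{1}{\tau}-\frac{1}{t}$, integration yields the factor $e^{c_1(1/\tau-1/t)}$. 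The main obstacle I anticipate is purely bookkeeping: matching the decay rates of the two integrands to the precise powers $t^{-1}$ and $t^{-2}$ that integrate to $\log(t/\tau)$ and $\frac{1}{\tau}-\frac{1}{t}$, and choosing $c_1,c_2$ (enlarging $\tau$ if necessary) so that the Lemma 7.1 estimates hold in the clean claimed form; no genuinely new idea beyond the earlier sections is needed.
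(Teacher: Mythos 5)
Your proposal is correct and takes essentially the same route as the paper's own proof: the same factorization of $R_{22}-R_{33}=\frac{1}{2}(A+B)\left[C(A+B)-B(A-B)\right]$ to get $R_{11}\geq R_{22}\geq R_{33}$ after a time $\tau$, the same sandwich $\left(2R_{33}-\frac{2}{3}R\right)\lambda\leq\frac{d\lambda}{dt}\leq\left(2R_{11}-\frac{2}{3}R\right)\lambda$ from (2.1), and the same Lemma 7.1 estimates giving integrands bounded below by $-c_2/t$ and above by $c_1/t^2$, which integrate to the stated bounds. The only deviations are cosmetic bookkeeping (e.g., you track the cubic term's decay as $t^{-7/2}$ where the paper bounds $A\leq A+B$ to get $t^{-4}$, and you argue positivity of the bracket by decay rates where the paper simply notes $CB\geq AB$ eventually); neither changes the argument.
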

\begin{proof}
By (7.1), we have
$$R_{22}-R_{33}=\frac{1}{2}(A+B)(B^2-AB+AC+BC).$$
By (1) in Lemma 7.1, we know there is time $\tau$ such that $CB-AB\geq 0$ for $t\geq \tau$. Then we get $$R_{11}\geq R_{22}\geq R_{33}$$ and
$$\left[\frac{1}{3}(A+B)^2-C(A+B)^2\right]\lam\leq\ddt \lam\leq \left[\frac{1}{3}(A+B)^2+A(A^2-B^2)\right]\lam$$ after $\tau$.

Then  $\lam(t) e^{\int_{\tau}^{t}-\left[\frac{1}{3}(A+B)^2-C(A+B)^2\right]dt}$ is nondecreasing  along the normalized Ricci flow,
 whereas $\lam(t) e^{\int_{\tau}^{t}-\left[\frac{1}{3}(A+B)^2+A(A^2-B^2)\right]dt}$ is nonincreasing.

Furthermore, by Lemma 7.1 we estimate
\begin{align*}
\frac{1}{3}(A+B)^2-C(A+B)^2& \geq -\frac{(A_0+B_0)^2\left(C_0+\frac{8}{3}\left(\frac{A_0}{B_0}\right)t\right)}{\left(1+\frac{8}{3}B_0^2 t\right)^2}\\
&\geq -\frac{c_2}{t}
\end{align*}
and
\begin{align*}
\frac{1}{3}(A+B)^2+A^2(A^2-B^2)&\leq \frac{1}{3} (A_0+B_0)\left(1+\frac{8}{3}B_0^2 t\right)^{-2}+(A+B)^2(A-B)\\
& \leq \frac{1}{3} (A_0+B_0)\left(1+\frac{8}{3}B_0^2 t\right)^{-2}\\
&+2A_0(A_0^2-B_0^2)\left(1+\frac{8}{3}B_0^2 t\right)^{-4}\\
&\leq c_1 t^{-2}
\end{align*}
if $t\geq \tau$, where $c_1$ and $c_2$ are two constants.

Integration from $\tau$ to $t$ yields
$$\lam(\tau)(\frac{t}{\tau})^{-c_2}\leq \lam(t)\leq \lam(\tau)e^{c_1(\frac{1}{\tau}-\frac{1}{t})}.$$
\end{proof}
\section{E(2)}
Given a metric $g_0$, we choose a  Milnor frame such that
$$\left[X_2,X_3\right]=X_1,\,\,\,\left[X_3,X_1\right]=X_2,\,\,\,\left[X_1,X_2\right]=0.$$
Under the normalization $A_0B_0C_0=1$, the nonzero curvature components are

\begin{equation}
\left\{
\begin{aligned}
&R_{11}=\frac{1}{2}A(A^2-B^2),\\
&R_{22}=\frac{1}{2}B(B^2-A^2),\\
&R_{33}=-\frac{1}{2}C(A-B)^2,\\
&R=-\frac{1}{2}(A-B)^{2}.
\end{aligned}\right.
\end{equation}
Then the Ricci flow equations are
\begin{equation}
\left\{
\begin{aligned}
&\frac{dA}{dt}=-\frac{2}{3}A(2A+B)(A-B),\\
&\frac{dB}{dt}=\frac{2}{3}B(2B+A)(A-B),\\
&\frac{dC}{dt}=\frac{2}{3}C(A-B)^2.
\end{aligned} \right.
\end{equation}

Without loss of generality, we may assume  that $A_0\geq B_0$.
\vskip 10pt
\noindent{\bf Lemma 8.1.} ( Isenberg and Jackon \cite{IJ92} )  {\it Let $(A,B,C)$ be a solution to system (8.2). Then we have the following results:

(1) $A\geq B$, $B_0\leq A\leq A_0$, $B_0\leq B\leq A_0$  and $C_0\leq C\leq C_0\frac{A_0}{B_0}$.

(2) $A-B\leq (A_0-B_0)e^{-4B_0^2t}$.

 The flow converges to a flat metric.}
\begin{theorem}
Let $\lam(t)$ be the  first eigenvalue of $-\Del$. Then there is time $\tau$ such that  $\lam(t) e^{\int_{\tau}^{t}-\left[\frac{1}{3}(A-B)^2+B(B^2-A^2)\right]dt}$ is nondecreasing  along the normalized Ricci flow,
whereas $\lam(t) e^{\int_{\tau}^{t}-\left[\frac{1}{3}(A-B)^2+A(A^2-B^2)\right]dt}$ is nonincreasing. Moreover, we get

$$e^{\frac{A_0^2(A_0-B_0)}{2B_0^2}( e^{-4B_0^2t}-e^{-4B_0^2\tau})}\lam(\tau)\leq \lam(t)\leq e^{\frac{A_0^2(B_0-A_0)}{4B_0^2}( e^{-4B_0^2t}-e^{-4B_0^2\tau})}\lam(\tau)$$ for $t\geq \tau$.

As $t$ goes to $\infty$, $\lam(t)$ approaches a constant which corresponds to the eigenvalue of a flat metric.
 \end{theorem}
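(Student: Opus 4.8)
The plan is to follow the same template used in the $\text{SU}(2)$, $\SL$, $\text{Heisenberg}$, and $\text{E}(1,1)$ sections: first order the diagonal Ricci components, then sandwich the logarithmic derivative of $\lambda$ between explicit exponentially decaying bounds, and finally integrate. I would begin by comparing $R_{11}$, $R_{22}$, and $R_{33}$ using the formulas in $(8.1)$. Since $A\geq B$ by Lemma 8.1(1), one has $R_{11}=\frac{1}{2}A(A^2-B^2)\geq 0\geq R_{22}=\frac{1}{2}B(B^2-A^2)$ immediately, and $R_{33}=-\frac{1}{2}C(A-B)^2\leq 0$ as well. The key comparison is between $R_{22}$ and $R_{33}$; I would compute
\begin{align*}
R_{22}-R_{33}&=\tfrac{1}{2}B(B^2-A^2)+\tfrac{1}{2}C(A-B)^2\\
&=\tfrac{1}{2}(A-B)\left[C(A-B)-B(A+B)\right],
\end{align*}
and argue that since $A-B\to 0$ exponentially while $B$ stays bounded below by $B_0>0$ and $C$ stays bounded, the bracket becomes negative for large $t$, giving $R_{22}\leq R_{33}$ after some time $\tau$. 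This yields the ordering $R_{11}\geq R_{33}\geq R_{22}$, so that Theorem in Section 2 gives $(2R_{22}-\frac{2}{3}R)\lambda\leq \ddt\lambda\leq (2R_{11}-\frac{2}{3}R)\lambda$ for $t\geq\tau$, which immediately produces the two claimed monotonic quantities.

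Next I would produce the explicit two-sided bound on $\frac{1}{\lambda}\ddt\lambda$. Using $R=-\frac{1}{2}(A-B)^2$, the lower rate is
\[
2R_{22}-\tfrac{2}{3}R=B(B^2-A^2)+\tfrac{1}{3}(A-B)^2=-\left[\tfrac{1}{3}(A-B)^2+B(A^2-B^2)\right],
\]
matching the exponent in the statement, and similarly the upper rate is
\[
2R_{11}-\tfrac{2}{3}R=A(A^2-B^2)+\tfrac{1}{3}(A-B)^2=\tfrac{1}{3}(A-B)^2+A(A^2-B^2).
\]
I would then bound $A^2-B^2=(A-B)(A+B)$ using Lemma 8.1: $A+B\leq 2A_0$ and $A-B\leq (A_0-B_0)e^{-4B_0^2t}$, together with $B\leq A_0$ and $(A-B)^2\leq (A_0-B_0)^2e^{-8B_0^2t}$. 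Keeping the dominant factor $A_0^2(A_0-B_0)e^{-4B_0^2t}$ (the $(A-B)^2$ term is of lower order), I expect bounds of the shape
\[
-A_0^2(A_0-B_0)e^{-4B_0^2t}\leq \tfrac{1}{\lambda}\ddt\lambda\leq A_0^2(A_0-B_0)e^{-4B_0^2t}.
\]

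Finally, integrating $\frac{1}{\lambda}\ddt\lambda$ from $\tau$ to $t$ against the bound $\pm A_0^2(A_0-B_0)e^{-4B_0^2t}$ gives $\log\lambda(t)-\log\lambda(\tau)$ controlled by $\mp\frac{A_0^2(A_0-B_0)}{4B_0^2}(e^{-4B_0^2t}-e^{-4B_0^2\tau})$, yielding exactly the displayed inequality after exponentiating. Because the exponent tends to a finite limit as $t\to\infty$, $\lambda(t)$ converges to a constant, and since Lemma 8.1 states the flow converges to a flat metric, this limiting constant is the eigenvalue of that flat metric, proving the last assertion. The main obstacle I anticipate is the $R_{22}$ versus $R_{33}$ comparison: one must check that the sign of the bracket $C(A-B)-B(A+B)$ really does stabilize, i.e. that the growth of $C$ (at most linear by Lemma 8.1) cannot overtake the exponential decay of $A-B$ relative to the bounded-below quantity $B(A+B)$; fixing the precise threshold time $\tau$ is where care is needed, though the exponential decay of $A-B$ makes the conclusion robust.
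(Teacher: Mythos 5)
Your proposal follows the paper's proof essentially line for line: the same factorization $R_{22}-R_{33}=\tfrac12(A-B)\left[C(A-B)-B(A+B)\right]$ with the same sign argument (note that Lemma 8.1 actually gives $C\le C_0\frac{A_0}{B_0}$, so $C$ is bounded; your closing remark about ``at most linear'' growth of $C$ imports the $E(1,1)$ behavior and is not needed here), the same ordering $R_{11}\ge R_{33}\ge R_{22}$ after a time $\tau$, the same sandwich on $\frac{1}{\lam}\ddt\lam$ coming from the evolution equation of Section 2, and the same final integration. The monotonicity assertions and the convergence argument at the end are fine.

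The genuine flaw is in your explicit rate bounds. From the estimates you actually invoke ($B\le A\le A_0$, $A+B\le 2A_0$, $A-B\le (A_0-B_0)e^{-4B_0^2t}$), the lower rate satisfies only
\[
2R_{22}-\tfrac23 R=\tfrac13(A-B)^2-B(A+B)(A-B)\ge -B(A+B)(A-B)\ge -2A_0^2(A_0-B_0)e^{-4B_0^2t},
\]
with constant $2A_0^2$, not $A_0^2$: the product $B(A+B)$ can be as large as $2A_0^2$ (take $B_0$ close to $A_0$, so that $B(A+B)$ stays near $2A_0^2$ along the whole flow), so your symmetric bound $\pm A_0^2(A_0-B_0)e^{-4B_0^2t}$ does not follow from what you cite. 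Relatedly, your claim that the symmetric bound integrates to ``exactly the displayed inequality'' is internally inconsistent: symmetric rates would produce the coefficient $\frac{A_0^2(A_0-B_0)}{4B_0^2}$ in \emph{both} exponents, whereas the theorem's bounds are asymmetric, with $\frac{A_0^2(A_0-B_0)}{2B_0^2}$ below and $\frac{A_0^2(B_0-A_0)}{4B_0^2}$ above; the denominator $2B_0^2$ in the lower exponent is precisely the trace of the rate constant $2A_0^2$. This is how the paper proceeds: lower rate $\ge -2A_0^2(A_0-B_0)e^{-4B_0^2t}$, upper rate $\le A_0^2(A_0-B_0)e^{-4B_0^2t}$ (after absorbing the lower-order $(A-B)^2$ term for $t\ge\tau$), and integration then reproduces the displayed exponents verbatim. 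Had your sharper lower rate been justified it would imply the stated bound a fortiori, but as written the step is unsupported; the fix is one line --- replace $A_0^2$ by $2A_0^2$ on the lower side and redo the elementary integral --- after which your argument coincides with the paper's.
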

\begin{proof}
It follows from (8.1) and Lemma 8.1  that
$$R_{22}-R_{33}=\frac{1}{2}(A-B)\left[C(A-B)-B(A+B)\right]\leq 0 $$ after a time $\tau$.
Then we have $R_{11}\geq R_{33}\geq R_{22}$ and
$$\left[\frac{1}{3}(A-B)^2+B(B^2-A^2)\right]\lam\leq \ddt \lam\leq \left[\frac{1}{3}(A-B)^2+A(A^2-B^2)\right]\lam$$ if $t\geq \tau.$

Then  $\lam(t) e^{\int_{\tau}^{t}-\left[\frac{1}{3}(A-B)^2+B(B^2-A^2)\right]dt}$ is nondecreasing  along the normalized Ricci flow,
 whereas $\lam(t) e^{\int_{\tau}^{t}-\left[\frac{1}{3}(A-B)^2+A(A^2-B^2)\right]dt}$ is nonincreasing.

More precisely,  we get
$$
\frac{1}{3}(A-B)^2+B(B^2-A^2)\geq -2A_0^2(A_0-B_0)e^{-4B_0^2t},
$$

$$\frac{1}{3}(A-B)^2+A(A^2-B^2)=\frac{1}{3}(A-B)(A-B+A^2+AB)\leq A _0^2(A_0-B_0)e^{-4B_0^2t}\lam,$$
and $$-2A_0^2(A_0-B_0)e^{-4B_0^2t}\lam\leq \ddt \lam(t)\leq A _0^2(A_0-B_0)e^{-4B_0^2t}\lam$$
if $t\geq \tau.$

Then $$-2A_0^2(A_0-B_0)e^{-4B_0^2t}\leq \frac{1}{\lam}\ddt \lam(t)\leq A _0^2(A_0-B_0)e^{-4B_0^2t}$$  which implies  $\lim_{t\rightarrow \infty}\lam(t)$ exists.

Integration from $\tau$ to $t$ gives
$$e^{\frac{A_0^2(A_0-B_0)}{2B_0^2}( e^{-4B_0^2t}-e^{-4B_0^2\tau})}\lam(\tau)\leq \lam(t)\leq e^{\frac{A_0^2(B_0-A_0)}{4B_0^2}( e^{-4B_0^2t}-e^{-4B_0^2\tau})}\lam(\tau)$$
for $t\geq \tau$.
\end{proof}
\vskip 30pt
{\bf Acknowledgement}

The author's research was supported by National Natural Science Foundation of China (Grant No.11001268) and Chinese University Scientific Fund (2014QJ002). The author would like to thank Professor Xiaodong Cao and Professor Laurent Saloff-Coste for their suggestions and interests in this work.

\vskip 30 pt

 \end{document}